\def\PP{{\mathbb P}}
\def\C{{\mathcal C}}
\def\D{{\mathcal D}}
\def\H{{\mathcal H}}
\def\H{{\mathcal H}}
\def\M{{\mathcal M}}
\def\tH{\widetilde{\mathcal H}}
\def\tC{\widetilde{\mathcal C}}
\def\tP{\widetilde{\PP}}
\def\tY{\widetilde{Y}}
\def\bM{\overline{\mathcal M}}
\def\bM{\overline{\M}}
\newtheorem{theorem}{Theorem}[section]
\newtheorem{lemma}[theorem]{Lemma}
\newtheorem{proposition}[theorem]{Proposition}
\newtheorem{definition-lemma}[theorem]{Definition-Lemma}
\newtheorem{conclusion}[theorem]{Conclusion}
\theoremstyle{definition}
\newtheorem{definition}[theorem]{\bf Definition}
\theoremstyle{remark}
\newtheorem{diagram}[theorem]{\bf Diagram}
\def\vandaag{\number\day\space\ifcase\month\or
 januari\or februari\or  maart\or  april\or mei\or juni\or  juli\or
 augustus\or  september\or  oktober\or november\or  december\or\fi,
\number\year}
\def\today{\ifcase\month\or
 Jan\or Febr\or  Mar\or  Apr\or May\or Jun\or  Jul\or
 Aug\or  Sep\or  Oct\or Nov\or  Dec\or\fi
 \space\number\day, \number\year}
\begin{document}

\title[Divisors on Hurwitz Spaces]{Divisors on Hurwitz Spaces: 
An Appendix to `The Cycle Classes of Divisorial Maroni Loci'}
\author{Gerard van der Geer $\,$}
\address{Korteweg-de Vries Instituut, Universiteit van
Amsterdam, Postbus 94248, 1090 GE Amsterdam,  The Netherlands}
\email{G.B.M.vanderGeer@uva.nl}
\author{$\,$ Alexis Kouvidakis}
\address{Department of Mathematics and Applied Mathematics, 
University of Crete,
GR-70013 Heraklion, Greece}
\email{kouvid@uoc.gr}
\subjclass{14H10,14H51}
\begin{abstract}
The Maroni stratification on the Hurwitz space of degree $d$ covers of genus $g$
has a stratum that is a divisor only if $d-1$ divides $g$. Here we
construct a stratification on the Hurwitz space that is analogous to the
Maroni stratification, but has a divisor
for all pairs $(d,g)$ with $d \leq g$ with a few exceptions 
and we calculate the divisor class of an extension
of these divisors to the compactified Hurwitz space.
\end{abstract}
\maketitle
%\centerline{\tt \today}
\begin{section}{Introduction}\label{sec:intro}
The Hurwitz space $\H_{d,g}$ of simply-branched covers 
of genus $g$ and degree $d$ carries a stratification 
named after Maroni (\cite{Maroni}) that is defined as follows. 
If $\gamma: C \to {\PP}^1$ is a simply-branched cover one takes
the dual of the cokernel of the natural map
$$
{\mathcal O}_{\PP^1} \to \gamma_{*}{\mathcal O}_{C}
$$
which is a vector bundle of rank $d-1$ on the projective line, hence is
isomorphic to ${\mathcal O}(a_1)\oplus \cdots \oplus {\mathcal O}(a_{d-1})$ for some
$(d-1)$-tuple $\alpha=(a_1,\ldots,a_{d-1})$, where we assume that the $a_i$
are non-decreasing. 
The loci of covers $\gamma: C \to {\PP}^1$ with fixed $\alpha$ are
the strata. 

It is known (see \cite{Ballico}) that for general
$\gamma: C \to {\PP}^1$ of genus
$$
g= k(d-1)+s \qquad \hbox{\rm with $0\leq s \leq d-2$}
$$
the tuple $\alpha$ takes the form $(k+1,\ldots,k+1,k+2,\ldots,k+2)$ 
with $s$ enties equal to $k+2$. 
Only the case with $s=0$ yields a Maroni stratum that is a divisor
(see \cite{C-M}  and \cite[Thm.\ 1.15]{Patel}).

In this paper we show how to define for the case that the genus
$g$ is not divisible by $d-1$ a stratification that has a stratum
that is a divisor for $g\geq d$ under exclusion of a few cases. 
If $d-1$ divides $g$ then this reduces to the stratification of 
Maroni loci. It
uses instead of the cokernel of 
${\mathcal O}_{\PP^1} \to \gamma_{*}{\mathcal O}_{C}$ 
the cokernel of a natural map
$$
{\mathcal O}_{\PP^1} \to \gamma_*{\mathcal O}_C(D)\, ,
$$
where $D$ is an appropriately chosen divisor of degree $s$
with support in the ramification locus of $\gamma$. 
The cycle classes of an extension of these divisors to the
compactified Hurwitz space $\overline{\H}_{d,g}$ 
can be calculated by using a global-to-local evaluation map 
$p^*p_* V \to V$
of a vector bundle $V$ on an extension of the ${\PP}^1$-fibration
$p: {\PP} \to \H_{d,g}$
to the compactified Hurwitz space that is trivial on the generic fibre
of $p$. The calculation and the answer are completely analogous to case
of the cycle classes of the Maroni divisors calculated in \cite{vdG-K}.
The cycle classes are given in terms of an explicit sum of boundary divisors. 
\bigskip

\noindent
{\sl Acknowledgement.} The first author thanks YMSC of Tsinghua University
for the hospitality enjoyed there.
\end{section}
%%%%%%%%%%%%%%%%%%%%%%%%%%%%%%%%%
\begin{section}{The Setting}\label{setting}
We recall the setting from \cite{vdG-K}.
We denote by $\overline{\H}_{d,g}$ the compactified Hurwitz space 
of admissible covers of degree $d$ and genus $g$. We have
$$
\overline{\H}_{d,g}-{\H}_{d,g}=\cup_{j,\mu} S_{j,\mu},
$$
where the $S_{j,\mu}=S_{b-j,\mu}$ are divisors indexed by $2\leq j \leq b-2$
and a partition $\mu=(m_1,\ldots,m_n)$ of $d$. These divisors can be reducible,
but a generic point corresponds to an admissible cover $\gamma: C \to P$
where $P$ is a genus $0$ curve consisting of two components $P_1, P_2$ 
of genus $0$ intersecting in one point $Q$ with $j_1=j$ or $b-j$ 
branch points on $P_1$ 
(resp.\ $j_2=b-j$ or $j$ branch points on $P_2$) 
and the inverse image $\gamma^{-1}(Q)$
consists of $n$ points $Q_1,\ldots,Q_n$ on $C$ with ramification indices 
$m_1,\ldots,m_n$. Since $\overline{\H}_{d,g}$ is not normal 
we normalize it and this results in a smooth stack $\widetilde{\H}_{d,g}$. 
We then have a diagram
$$
\begin{xy}
\xymatrix{
{\tC} \ar[r]^c \ar[d]_{\varpi}& {\bM}_{0,b+1}\ar[d]^{\pi_{b+1}} \\
{\tH}_{d,g} \ar[r]^{h} & {\bM}_{0,b} \\
}
\end{xy}
$$
where $\widetilde{\C}$ is the universal curve and
${\bM}_{0,b}$ is the moduli space of stable $b$-pointed curves of genus $0$
and $\pi_{b+1}$ is the map that forgets the $(b+1)$st point.
With $\PP$ the fibre product of ${\bM}_{0,b+1}$ and ${\tH}_{d,g}$ over
${\bM}_{0,b}$ we have a diagram
$$
\begin{xy}
\xymatrix{
{\tC} \ar[r]^{\alpha} \ar[rd]_{\varpi}& {\PP}\ar[d]^{\varpi'} 
\ar[r]^{c'} &{\bM}_{0,b+1}\ar[d]^{\pi_{b+1}} \\
& {\tH}_{d,g} \ar[r]^{h} & {\bM}_{0,b} \\
}
\end{xy}
$$
We now work over a base $B$ (it can be $\H_{d,g}$, ${\tH}_{d,g}$ or 
often a $1$-dimensional base). 
But we shall suppress the index $B$.
Note that normalization commutes with base change.
In \cite[Lemma 4.1]{vdG-K} we showed that ${\tC}$ and ${\PP}$ 
have only singularities of type $A_k$. We resolve the singularities 
of ${\PP}$ obtaining a space $\widetilde{\PP}$ and then let
$Y$ be the normalization of ${\tC}\times_{\PP}{\tP}$ and let
$\widetilde{Y}$ be the resolution of singularities of $Y$. 
We then find the basic  diagram as in \cite{vdG-K}:

\begin{diagram}\label{basicdiagram}
$$
\begin{xy}
\xymatrix{
\widetilde{Y} \ar@/^/[drr]^{\rho}  \ar[rd]^{\nu} \ar[rdd]_{\tilde{\pi}} \\
& Y \ar[r] \ar[d]^{\pi} & {\tC} \ar[d] \ar[rdd]^{\varpi} \\
& {\tP} \ar[r] \ar[rrd]_p & {\PP} \ar[rd] \\
&&& B
}
\end{xy}
$$
%We denote by $q: \widetilde{Y} \to B$ the composition
%from upper left to lower right.
\end{diagram}

We observe that the finite map $\pi: Y \to \tP$ is flat 
as  $Y$ is Cohen-Macaulay and $\tP$ is smooth. Actually, $Y$
has rational singularities only.

\end{section}
\begin{section}{Constructing Divisors}
Let $\D$ be an effective divisor on $Y$ of relative degree $s$ over $B$, 
supported on the sections. This is a Cartier divisor since the sections 
do not intersect the singular locus and so ${\mathcal O}(\D)$ is a line bundle on $Y$. 
Therefore it follows 
that ${\pi}_*{\mathcal O}(\D)$ is a locally free sheaf on ${\tP}$. 
We denote by $\tilde{\D}$ the proper transform of $\D$ under the resolution
map $\nu$.
Since $\nu_*{\mathcal O}(\tilde{{\D}})= \nu_*{\mathcal O}_{\tilde{Y}}
\otimes {\mathcal O}({\mathcal D})$, we conclude by
 \cite[Lemma 4.4]{vdG-K} and the fact that $R^j\nu_*{\mathcal O}_{\tY}=0$
for $j\geq 1$, 
that $\tilde{\pi}_*{\mathcal O}(\tilde{\D})= \pi_*{\mathcal O}({\D})$. We can use the 
restriction of $\pi_*(\mathcal O(\D))$ to the open part over $\H_{d,g}$
to define a stratification by type of the bundle on $\PP^1$ just
as for the Maroni stratification. We are interested in the case we get
a divisor.

For a divisor $\D$ we have an inclusion
$$
\iota_{\D} : {\mathcal O}_{\tilde{\PP}} \to \tilde{\pi}_*{\mathcal O}(\tilde{\D})\, .
$$
Note that the image $\iota_{\D}(1)$ of the section $1$ 
is a nowhere vanishing section of
$\tilde{\pi}_*{\mathcal O}(\tilde{\D})$.

We now introduce the vector bundle that we use
to define a stratification.
\begin{definition} 
We let ${\mathcal K}_{\D}$ be the cokernel of $\iota_{\D}$. We define
$V_{\mathcal D} := {\mathcal K}_{\mathcal D}^{\vee}$ as the dual
${\mathcal O}_{\tilde{\PP}}$-module. 
Since $\iota_{\D}(1)$ is a nowhere vanishing section of
$\tilde{\pi}_* {\mathcal O}(\tilde{\D})$ the sheaf 
${\mathcal K}_{\mathcal D}$ is locally free of rank $d-1$ 
on $\tilde{\PP}$ and therefore
so is $V_{\mathcal D}$.
\end{definition}

We start with a lemma which  follows immediately from
the Riemann-Roch theorem.

\begin{lemma}
\label{lemma_minimal}
Let $U = \oplus_{i=1}^r {\mathcal O}(a_i)$ be a vector bundle of rank $r$ and degree $n$ on $\PP^1$.
Suppose that $-1 \leq a_1 \leq \cdots \leq a_r$. Then $h^0(V ) = r + n$. Moreover, this is
the minimum dimension for the space of sections of a vector bundle of rank $r$ and degree $n$ on ${\mathbb P}^1$.
\end{lemma}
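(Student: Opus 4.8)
The plan is to prove both assertions by a direct appeal to the Riemann--Roch theorem on $\PP^1$, treating each summand of the decomposition $U = \oplus_{i=1}^r \O(a_i)$ separately and then exploiting the hypothesis $a_i \geq -1$ to eliminate the higher cohomology. First I would recall that for a line bundle $\O(a)$ on $\PP^1$ we have $h^0(\O(a)) = a+1$ whenever $a \geq -1$, and $h^0(\O(a)) = 0$ when $a \leq -1$; these two formulas agree at the boundary value $a = -1$, where both give $0$. Since cohomology is additive over direct sums, $h^0(U) = \sum_{i=1}^r h^0(\O(a_i))$, and under the assumption $-1 \leq a_1 \leq \cdots \leq a_r$ every summand contributes $a_i + 1$, so that
\[
h^0(U) = \sum_{i=1}^r (a_i + 1) = \left(\sum_{i=1}^r a_i\right) + r = n + r,
\]
which is exactly the claimed value (noting the minor typo in the statement, which writes $V$ for $U$).

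For the minimality assertion I would argue that any vector bundle $U'$ of rank $r$ and degree $n$ on $\PP^1$ splits as $\oplus_{i=1}^r \O(b_i)$ with $\sum b_i = n$, and I would bound $h^0$ from below by discarding the summands with $b_i \leq -1$. Concretely, $h^0(\O(b)) \geq b+1$ holds for every integer $b$ precisely because $h^0(\O(b)) = \max(b+1,0) \geq b+1$. Summing this inequality over all $i$ gives
\[
h^0(U') = \sum_{i=1}^r h^0(\O(b_i)) \geq \sum_{i=1}^r (b_i + 1) = n + r,
\]
so $n+r$ is a lower bound for the dimension of the space of sections across all bundles of the given rank and degree. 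Combined with the first part, which exhibits a bundle attaining this bound, this shows $n+r$ is indeed the minimum.

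The calculation here is entirely routine; the only point requiring a moment's care is the role of the hypothesis $a_i \geq -1$, which is what forces the equality $h^0(\O(a_i)) = a_i + 1$ rather than the mere inequality $h^0(\O(a_i)) \geq a_i + 1$. I expect no genuine obstacle: the two halves of the statement are two sides of the same elementary estimate $h^0(\O(b)) = \max(b+1,0)$, with equality in the relevant range governing the first claim and the inequality governing the second. The substantive content of the lemma is not the proof but its later use, namely that the bundle $V_{\D}$ (or its associated splitting type) achieves this minimal number of sections exactly when it is balanced with all slopes at least $-1$, which is the condition one will impose to cut out the divisorial stratum.
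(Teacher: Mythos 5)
Your proof is correct. Since the paper offers no written argument beyond the remark that the lemma ``follows immediately from the Riemann--Roch theorem,'' the comparison is with that intended one-liner, and the two routes do differ in the minimality half. The Riemann--Roch reading is: for \emph{any} vector bundle $E$ of rank $r$ and degree $n$ on $\PP^1$ one has $\chi(E)=h^0(E)-h^1(E)=n+r$, hence $h^0(E)=n+r+h^1(E)\geq n+r$, with equality exactly when $h^1(E)=0$; the hypothesis $a_i\geq -1$ kills $h^1$ summand by summand (since $h^1(\O(a))=h^0(\O(-a-2))=0$ for $a\geq -1$), so both halves of the lemma drop out at once, and crucially the lower bound $h^0\geq \chi$ requires no structural information about $E$ beyond $h^1\geq 0$. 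Your version instead invokes Grothendieck's splitting theorem to write the arbitrary comparison bundle as $\oplus_i\O(b_i)$ and then sums the elementary estimate $h^0(\O(b))=\max(b+1,0)\geq b+1$. This is perfectly valid on $\PP^1$, where the splitting theorem holds, and it makes the mechanism transparent: one sees exactly which summands ``waste'' sections, namely those with $b_i\leq -2$, which is also the picture used later in the paper when unbalanced types are detected by an excess of sections. What the Riemann--Roch route buys is independence from the splitting theorem, so it generalizes verbatim to higher-genus curves and to sheaves one has not decomposed; what your route buys is explicitness at the cost of a (harmless, on $\PP^1$) extra input. Your handling of the boundary case $a=-1$, where $h^0=0=a+1$, and your note that the statement's $V$ is a typo for $U$, are both accurate.
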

Given a cover $\gamma: C \to {\PP}^1$ and an effective divisor $D$ 
of degree $s$ supported on the  
ramification divisor of $\gamma$, we write
$$
\gamma_*{\mathcal O}_C (D) \cong {\mathcal O}_{{\mathbb P}^1}(-a_0) \oplus {\mathcal O}_{{\mathbb P}^1}(-a_1) \oplus \cdots
\oplus {\mathcal O}_{{\mathbb P}^1}(-a_{d-1}) ,
$$
with $a_0\leq a_1\leq \cdots \leq a_{d-1}$.  
Note that $\sum_{i=1}^{d-1}a_i=(k+1)(d-1)$.

We let $\gamma : C \to \PP^1$ represent a general point of 
${\mathcal H}_{d,g}$. We want to determine the numbers $a_i$.
We know by results of Ballico \cite{Ballico} that
$$
\gamma_*{\mathcal O}_C \cong {\mathcal O}_{\PP^1} \oplus {\mathcal O}_{\PP^1}(-(k + 1))^{\oplus d-1-s}
\oplus  {\mathcal O}_{\PP^1}(-(k + 2))^{\oplus s} .
$$
Thus we get
\[
\gamma_*{\mathcal O} \otimes {\mathcal O}(k) \cong {\mathcal O}_{\PP^1}(k) \oplus 
{\mathcal O}_{\PP^1}(-1)^{\oplus d-1-s} \oplus  {\mathcal O}_{\PP^1}(-2)^{\oplus s} .
\]
so that $h^0(k \, g^1_d) = k + 1$ and $h^1(k \, g^1_d)=s$.

\begin{proposition}\label{propositionkg1d+D}
For $\gamma : C \to \PP^1$ a general point of ${\mathcal H}_{d,g}$  
we can choose an effective divisor $D$ of degree $s$ 
supported on the ramification locus $R$ of $\gamma$ satisfying
\[
h^0(D) = 1 \quad \hbox{and}\quad  h^0(k \, g^1_d+D) = k + 1 .
\]
\end{proposition}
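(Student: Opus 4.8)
The plan is to translate both conditions into a single statement about the splitting type of $\gamma_*\mathcal{O}_C(D)$ and then to produce the required divisor by successive elementary modifications. Writing $\gamma_*\mathcal{O}_C(D)\cong\bigoplus_{i=0}^{d-1}\mathcal{O}_{\PP^1}(-a_i)$ with $a_0\le\cdots\le a_{d-1}$, a Riemann--Roch computation gives $\sum_{i=0}^{d-1}a_i=(k+1)(d-1)$. The condition $h^0(D)=1$ is equivalent to $a_0=0$ and $a_i\ge1$ for $i\ge1$ (a single non-negative summand, and it is $\mathcal{O}_{\PP^1}$), which also yields the stated normalization $\sum_{i=1}^{d-1}a_i=(k+1)(d-1)$. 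For the second equation I would twist by $\mathcal{O}(k)$ and apply Lemma \ref{lemma_minimal}: if every summand of $\gamma_*\mathcal{O}_C(D)\otimes\mathcal{O}(k)=\bigoplus\mathcal{O}(k-a_i)$ satisfies $k-a_i\ge-1$, i.e. $a_i\le k+1$, then $h^0(k\,g^1_d+D)$ attains its minimal value $d+(k+1-d)=k+1$. Since $a_i\le k+1$ for $i\ge1$ together with $\sum_{i\ge1}a_i=(k+1)(d-1)$ forces $a_1=\cdots=a_{d-1}=k+1$, the two desired equalities hold precisely when $\gamma_*\mathcal{O}_C(D)$ is the balanced bundle $\mathcal{O}_{\PP^1}\oplus\mathcal{O}_{\PP^1}(-(k+1))^{\oplus(d-1)}$. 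Thus it suffices to produce one effective $D$ of degree $s$ on $R$ with this balanced pushforward.

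Starting from Ballico's type $\gamma_*\mathcal{O}_C\cong\mathcal{O}_{\PP^1}\oplus\mathcal{O}_{\PP^1}(-(k+1))^{d-1-s}\oplus\mathcal{O}_{\PP^1}(-(k+2))^{s}$, I would add the ramification points of $D$ one at a time. Adding a single simple ramification point $r$ over $p\in\PP^1$ realizes $\gamma_*\mathcal{O}_C\subset\gamma_*\mathcal{O}_C(r)$ as an elementary up-modification of degree one at $p$, whose direction in the fibre is the nilpotent local coordinate $y$ at $r$; since $y$ has vanishing trace it lies in the traceless part $E^\vee_p$ and has no component along the $\mathcal{O}_{\PP^1}$-summand. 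A local computation on $\PP^1$ shows that such an up-modification raises by one the lowest summand along which the direction has non-zero component. Hence, provided $y$ meets the lowest block $\mathcal{O}_{\PP^1}(-(k+2))^{s}$, the modification turns one copy of $\mathcal{O}_{\PP^1}(-(k+2))$ into $\mathcal{O}_{\PP^1}(-(k+1))$ while leaving the trivial summand untouched. Iterating $s$ times yields the balanced bundle, and because the trivial summand is never modified, $h^0(D)=1$ is preserved throughout.

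The main obstacle is that only finitely many directions are available, one per ramification point, so a literally generic direction is not at our disposal; one must guarantee that, for general $\gamma$ and at each of the $s$ stages, some ramification point has its $y$-direction meeting the shrinking lowest block of the splitting reached so far. I would handle this by working on the space $\mathcal{X}$ of pairs $(\gamma,D)$ with $D$ a reduced sum of $s$ ramification points: balancedness is an open condition there, and $\mathcal{X}$ is irreducible because the Hurwitz monodromy on the branch points is the full symmetric group and so acts transitively on $s$-element subsets of the ramification. Consequently it is enough to exhibit a single pair $(\gamma_0,D_0)$ with balanced pushforward, after which openness together with the irreducibility of $\mathcal{H}_{d,g}$ propagates the property to a general $\gamma$. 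Producing that one example — equivalently, verifying that the $s$ successive $y$-directions can be made to meet the lowest block — is the crux; the modification analysis reduces it to a transversality statement between the ramification directions and the Tschirnhausen splitting, which I expect to be the hardest point to make fully rigorous, the excluded pairs $(d,g)$ presumably being exactly those where this transversality fails.

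As a cross-check I would record the Serre-dual reformulation: $h^0(k\,g^1_d+D)=k+1$ is equivalent to $h^0(K_C-k\,g^1_d-D)=0$, i.e. to $D$ imposing independent conditions on the $s$-dimensional system $|K_C-k\,g^1_d|=|R-(k+2)g^1_d|$, while $h^0(D)=1$ says that $D$ imposes independent conditions on the canonical system $|K_C|$. The latter is the easier of the two, since $s$ points impose independent conditions on the much larger system $|K_C|$ for general-position reasons, so the real content is concentrated in the first condition, consistently with the modification picture above.
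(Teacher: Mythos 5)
Your reduction of the two conditions to the balanced splitting $\gamma_*\mathcal{O}_C(D)\cong\mathcal{O}_{\PP^1}\oplus\mathcal{O}_{\PP^1}(-(k+1))^{\oplus(d-1)}$ is correct, but the proof then stops exactly at its essential point. The elementary-modification strategy requires that at each of the $s$ stages some ramification point's direction meets the current lowest block, and you neither prove this transversality nor exhibit a single pair $(\gamma_0,D_0)$ with balanced pushforward; the openness-plus-irreducibility reduction only \emph{propagates} such an example, it cannot create one. (The monodromy argument also needs care here, since the points of $\H_{d,g}$ in this paper carry \emph{ordered} branch points, so the ramification points are ordered and monodromy does not permute them within the space; one would have to pass to the unordered quotient.) So, as you yourself concede, the crux is missing, and this is a genuine gap rather than a routine detail: the whole difficulty of the proposition is concentrated there.

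What you relegate to a ``cross-check'' at the end is in fact the complete proof, and it closes the gap with nothing more than a degree count. By Serre duality $h^0(K_C-k\,g^1_d)=h^1(k\,g^1_d)=s$, and the target condition $h^0(k\,g^1_d+D)=k+1$ is equivalent (via Riemann--Roch) to $h^0(K_C-k\,g^1_d-D)=0$. Now observe that the number of ramification points is $b=2g-2+2d$, which strictly exceeds $\deg(K_C-k\,g^1_d)=2g-2-kd$; since a nonempty linear system can have at most $\deg$ many base points, some ramification point $p_1$ is not a base point of $|K_C-k\,g^1_d|$, so $h^0(K_C-k\,g^1_d-p_1)=s-1$. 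The same degree comparison holds at every subsequent stage, so one picks ramification points $p_1,\dots,p_s$ dropping $h^0$ by one each time, and $D=p_1+\cdots+p_s$ satisfies $h^0(K_C-k\,g^1_d-D)=0$, hence $h^0(k\,g^1_d+D)=k+1$. Moreover the condition $h^0(D)=1$ need not be argued separately (your appeal to ``general position'' is shaky anyway, since ramification points are not in general position): from $h^0(D_1+D_2)\geq h^0(D_1)+h^0(D_2)-1$ and $h^0(k\,g^1_d)=k+1$ for general $\gamma$, any $D$ with $h^0(D)\geq 2$ would force $h^0(k\,g^1_d+D)\geq k+2$, so the second equality implies the first. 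This dual route replaces the unproven transversality statement by an elementary count, which is why the paper takes it.
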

\begin{proof}
Note that since for a general  $\gamma $ we have $h^0(k\, g^1_d)=k+1$, 
the first condition is implied by the second.
This is because for any effective divisors $D_1$, $D_2$ on $C$ we have  
that   $h^0(D_1+D_2)\geq h^0(D_1)+h^0(D_2)-1$.
Observe that $b-(2g - 2) = 2\, d$. 
We consider the linear system $|K_C - k\, g^1_d|$.
For reasons of degree we can choose a ramification point 
$p_1$ which is not a base point of $|K_C - k\, g^1_d|$ 
and this gives $h^0(K_C - k\, g^1_d-p_1)=s-1$. 
Then by the same degree argument we can find a ramification point 
$p_2$ such that it is not a base point
of $|K_C - k \, g^1_d-p_1|$ such that $h^0(K_C - k\, g^1_d-p_1-p_2)=s-2$. 
Repeating the argument we arrive at a divisor of degree $s$ 
supported on the ramification locus
such that $h^0(K_C - k\, g^1_d -D)=0$, 
hence by duality $h^1(k \, g^1_d+D)=0$. By Riemann-Roch we have
$h^0(k \, g^1_d + D) = k + 1$.
\end{proof}
Now if we choose $D$ as in 
Proposition \ref{propositionkg1d+D} 
we have $h^0(D)=1$ and therefore $a_0=0$ and
$a_i>0$ for $i\geq 1$. Moreover $h^0(k \, g^1_d+D)=k+1$ and  since
\[
\gamma_*{\mathcal O}_C (D) \otimes {\mathcal O}(k) \cong {\mathcal O}_{{\PP}^1}(k) \oplus 
{\mathcal O}_{{\PP}^1}(k-a_1) \oplus \cdots
\oplus {\mathcal O}_{{\PP}^1}(k-a_{d-1}) 
\]
this implies that $a_i \geq k+1$ for all 
$i = 1, \cdots , d-1$. Since the $a_i$ add up to $(k+1)(d-1)$
we conclude that all $a_i = k + 1$.
\begin{conclusion}
\label{conclusion_1}
If we choose $\gamma$  and $D$ as in Proposition \ref{propositionkg1d+D} 
then the dual of the cokernel of
${\mathcal O}_{{\PP}^1} \to \gamma_*{\mathcal O}_C(D)$ has type
${\mathcal O}(k +1)^{\oplus d-1}$.
\end{conclusion}

We now want to see that our degeneracy locus is non-empty in 
the open Hurwitz space $\H_{d,g}$. 
According to Ohbuchi \cite{Ohbuchi} only so-called acceptable $(d-1)$-tuples
$(a_1, \cdots , a_{d-1})$ can occur as the indices of the 
dual of the cokernel of ${\mathcal O}_{{\PP}^1} \to \gamma_*{\mathcal O}_C $; 
here acceptable is defined as follows, see \cite{Patel}.
\begin{definition}
\label{conditions}
A non-decreasing $(d-1)$-tuple of natural numbers 
$(a_1,\cdots, a_{d-1})$ with $\sum_{i=1}^{d-1}a_i= b/2$
is said to be acceptable for $(d, g)$ if the $a_i$ satisfy
\begin{enumerate}
\item  $a_1 \geq b/d(d - 1)$;
\item $a_{d-1}\leq b/d$;
\item $a_{i+1}-a_1\leq a_1$.
\end{enumerate}
\end{definition}

We now consider the unique acceptable $(d - 1)$-tuple $\alpha$ 
for which with $a_1=k$
and for which the sum
\[
\sum_{i=1}^{d-1}(d-i)a_i
\]
is maximal. This means that this $\alpha$ 
is the most balanced $(d-1)$-tuple with $a_1 = k$.
In \cite{Coppens} Coppens proved the following existence result, 
see also \cite{Patel}.

\begin{theorem}\label{Coppens}
Let $a_1$ be an integer satisfying (2) of Definition \ref{conditions}. 
If $\alpha $ is the unique acceptable $(d - 1)$-tuple 
$(a_1, \cdots, a_{d-1})$ for which the sum
$\sum_{i=1}^{d-1}(d-i)a_i$ is maximal, 
then the locus in ${\H}_{d,g}$ of covers
$\gamma : C \to  {\PP}^1$ with invariant $\alpha$ is not empty.
\end{theorem}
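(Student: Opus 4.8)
The plan is to realize the covers with invariant $\alpha$ as curves inside the rational normal scroll determined by $\alpha$ and to produce one smooth irreducible such curve of the correct genus by a Bertini-type argument; its mere existence then exhibits a point of the claimed locus. Concretely, set $E=\bigoplus_{i=1}^{d-1}\O_{\PP^1}(a_i)$ and let $\psi\colon S=\PP(E)\to \PP^1$ be the associated projective bundle of relative dimension $d-2$, with tautological class $\xi$ and fibre class $f$. A degree-$d$ cover $\gamma\colon C\to\PP^1$ with $\gamma_*\O_C\cong \O\oplus\bigoplus_i\O(-a_i)$ embeds $C$ in $S$ as a curve meeting each fibre of $\psi$ in $d$ points, in such a way that the scrollar data of $C$ recover $E$; conversely a general smooth irreducible $C\subset S$ of relative degree $d$ whose relative canonical resolution has the expected shape is such a cover. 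First I would record this correspondence, so that the task becomes the construction of a single $C\subset S$ that is smooth, irreducible, of arithmetic genus $g$, simply branched over $\PP^1$, and of invariant exactly $\alpha$.

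Second I would pin down the class in which $C$ must lie. For $d=3$ the scroll $S$ is a surface and $C$ is a divisor; writing $[C]=3\xi+mf$, the two requirements $\deg(\psi|_C)=3$ and $p_a(C)=g$ determine $m$ uniquely. For $d\ge 4$ the curve has codimension $d-2$ in $S$ and is no longer a divisor, and here I would invoke the Casnati--Ekedahl structure theorem to write $C$ as the zero locus of a general section of an explicit homogeneous bundle $\mathcal{F}$ on $S$ built from $E$, again with the twist fixed by $\deg(\psi|_C)=d$ and $p_a(C)=g$. In either case the numerical bookkeeping is routine once the set-up is in place.

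The heart of the matter is the Bertini step: I would show that the linear system (for $d=3$), respectively the bundle $\mathcal{F}$ (for $d\ge4$), is globally generated on $S$, so that its general member is smooth everywhere. This is exactly where the acceptability of $\alpha$ enters: condition (2) bounds $a_{d-1}$ so the scroll is not too unbalanced, condition (1) gives the complementary lower bound on $a_1$, and the balancing condition (3) secures the positivity of $E$ needed for the global generation of $\mathcal{F}$. With smoothness in hand, irreducibility follows from a connectedness/vanishing argument for the relevant twist, and the genericity of the section also yields that $\psi|_C$ is simply branched, so that $C$ gives a genuine point of $\H_{d,g}$ rather than of a boundary or non-simply-branched locus. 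I expect this global-generation-and-smoothness verification, uniform in $d$ and under the precise inequalities of Definition \ref{conditions}, to be the main obstacle.

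Finally I would check that a general member of the constructed family has invariant exactly $\alpha$. Since $C$ is built inside $S=\PP(E)$ with $E$ of type $\alpha$, its scrollar invariant cannot be strictly more balanced than $\alpha$; on the other hand the general member carries the generic invariant of the family, and by upper-semicontinuity of $h\mapsto h^0(C_h,n\,g^1_d)$ this generic invariant is the most balanced tuple occurring. By hypothesis $\alpha$ is precisely the maximizer of $\sum_{i=1}^{d-1}(d-i)a_i$, hence the most balanced acceptable tuple with the given $a_1$, so these two constraints force the general invariant to be $\alpha$ and not a more special degeneration. This last point — that genericity realizes the balanced tuple itself rather than a specialization of it — is exactly why the hypothesis singles out the maximizer of $\sum(d-i)a_i$.
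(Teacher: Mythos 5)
First, a point of comparison: the paper does not prove this statement at all. It is quoted as an existence theorem of Coppens \cite{Coppens} (see also \cite{Patel}), so there is no in-paper argument to measure yours against; your proposal has to stand on its own. It does not, because of a genuine gap at its core. The fatal step is where, for $d\ge 4$, you ``invoke the Casnati--Ekedahl structure theorem to write $C$ as the zero locus of a general section of an explicit homogeneous bundle $\mathcal{F}$ on $S$ built from $E$''. No such description exists. Casnati--Ekedahl gives a length-$(d-2)$ resolution of ${\mathcal O}_C$ by bundles on $S=\PP(E)$; this translates into a ``choose a general section'' construction only in very low degree: $d=3$ ($C$ a divisor), $d=4$ ($C$ a relative complete intersection of two quadrics), and $d=5$ only in Pfaffian form (codimension-$3$ Gorenstein), not as the zero scheme of a section of a rank-$3$ bundle. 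For $d\ge 6$ there is no structure theorem at all; that is a well-known open problem. Moreover, even for $d=4,5$ the structure data of the cover is not $E$ alone: it involves auxiliary syzygy bundles on $\PP^1$ (e.g.\ a rank-$2$ bundle $F$ with $\det F\cong\det E$ when $d=4$) whose splitting types are additional discrete invariants \emph{not} determined by $\alpha$. Your Bertini step --- global generation of the relevant twists --- depends precisely on those splitting types, and proving that some admissible choice of them exists and yields a smooth, irreducible, simply branched $C$ with Tschirnhausen bundle exactly $E$ is the hard content of the existence problem. Conditions (1)--(3) of Definition \ref{conditions} constrain only $E$ and cannot supply that, so the ``main obstacle'' you defer is not a verification but the theorem itself.

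A sanity check that the approach overreaches: as described, your construction never uses the hypothesis that $\alpha$ maximizes $\sum_{i=1}^{d-1}(d-i)a_i$ among acceptable tuples with given $a_1$; it would apply verbatim to any acceptable tuple and hence would show that \emph{every} acceptable tuple is realized by a smooth cover. That is far stronger than Coppens's theorem and is not established (Ohbuchi's conditions \cite{Ohbuchi} are only necessary; which tuples actually occur is subtle, which is exactly why the paper must invoke Coppens for this extremal $\alpha$). Your final paragraph does invoke maximality, but only to identify the invariant of the general member, and there it is not needed: if $C\subset\PP(E)$ is relative-canonically embedded --- which is what the structure theory produces --- then its Tschirnhausen bundle is $E$ on the nose, with no semicontinuity argument required. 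The maximality of $\alpha$ has to do real work inside the construction (in the known proofs it is what pins down the invariant of a suitably generic construction or smoothing, given $a_1$), and that mechanism is absent from your plan.
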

We apply this in the case $a_1 = k$ and 
$\sum_{i=1}^{d-1}(d-i)a_i=(k + 1)(d - 1) + s$. We find the
following maximizing $(d - 1)$-tuples for the $a_i$ of 
$V = \oplus _{i=1}^{d-1}{\mathcal O}(a_i)$
 with $a_1 = k$.

\begin{lemma}\label{lemma_3_cases}
If $a_1 = k$ we have the following maximizing acceptable $(d-1)$-tuples:
\begin{enumerate}
\item if $s\leq d - 4$ the maximizing sequence is 
$(k, (k + 1)^{d-s-3}, (k + 2)^{s+1})$;
\item if $s = d -3$ the maximizing sequence is 
$(k, (k + 2)^{d-2})$, with $g\neq 2(d-2)$;
\item if $s = d - 2$ the maximizing sequence is 
$(k, (k + 2)^{d-3}, k + 3)$, with $g\neq 2d-3$  and $(d,g)\neq (3,5)$.
\end{enumerate}
 \end{lemma}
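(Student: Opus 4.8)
The plan is to treat the determination of $\alpha$ as a pure optimization problem and only afterwards impose acceptability: among all non-decreasing integer $(d-1)$-tuples $(a_1,\dots,a_{d-1})$ with $a_1=k$ and fixed total $\sum_{i=1}^{d-1}a_i=(k+1)(d-1)+s$, I would first find the tuples maximizing $\sum_{i=1}^{d-1}(d-i)a_i$, and then check which of them satisfy Definition \ref{conditions}. As motivation I would record the reformulation
$$\sum_{i=1}^{d-1}(d-i)a_i=\sum_{j=1}^{d-1}(a_1+\cdots+a_j),$$
so that, since both the first term $a_1=k$ and the total are fixed, maximizing the objective amounts to making the intermediate partial sums as large as possible, which for a non-decreasing tuple with fixed total means making the tuple as balanced as possible.

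The key step is a smoothing (exchange) principle making this precise: among non-decreasing integer tuples with $a_1=k$ and the total both fixed, $\sum(d-i)a_i$ is maximized exactly when the free entries $a_2\le\cdots\le a_{d-1}$ are balanced, i.e.\ $a_{d-1}-a_2\le 1$. Indeed, if they take two values differing by at least $2$, then transferring one unit from the first entry of the top block to the last entry of the bottom block keeps the tuple non-decreasing and the total unchanged while strictly increasing $\sum(d-i)a_i$, since the weight $d-i$ is strictly larger at the earlier index. This simultaneously yields uniqueness of the maximizer and reduces the whole problem to distributing the excess $\sum_{i=2}^{d-1}(a_i-k)=(d-1)+s$ as evenly as possible over the $d-2$ entries $a_2,\dots,a_{d-1}$.

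It then remains to carry out this even distribution explicitly, which produces the three regimes. Writing $(d-1)+s=(d-2)+(s+1)$, each free entry receives excess at least $1$ and the residue $s+1$ raises that many entries by a further unit: this gives $(k,(k+1)^{d-s-3},(k+2)^{s+1})$ when $s+1\le d-2$, i.e.\ $s\le d-4$. When $s=d-3$ the residue fills all $d-2$ slots evenly, giving $(k,(k+2)^{d-2})$, and when $s=d-2$ a single unit is left over, producing the top entry $k+3$ in $(k,(k+2)^{d-3},k+3)$. This reproduces the three tuples in the statement.

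Finally I would verify acceptability and isolate the exceptions. Conditions (1) and (2) of Definition \ref{conditions} are routine to check against these explicit tuples and are non-binding away from very small $d$; the decisive constraint is the spread/gap condition (3), which fails when the bottom jump is too large relative to $a_1=k$. In case (1) every successive gap is at most $1$, so (3) holds for all $k\ge 1$ and no exclusion is needed; in cases (2) and (3) the jump $a_2-a_1=2$ forces $k\ge 2$, i.e.\ $g\neq 2(d-2)$ respectively $g\neq 2d-3$, while the two-entry degeneration at $d=3$ yields the sporadic failure $(d,g)=(3,5)$, where the single gap $a_2-a_1=3$ exceeds $k=2$ (and condition (2) fails there as well). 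Note that in each excluded case the balanced tuple already minimizes $a_{d-1}$ among all tuples with $a_1=k$, so once it violates acceptability no acceptable tuple with $a_1=k$ survives, which is exactly why the case must be excluded. I expect this last step — matching the explicit tuples against all three acceptability inequalities and pinning down precisely the boundary values of $(d,g)$, including the sporadic one — to be the main obstacle, since it requires a case analysis rather than the single extremality computation that drives the rest of the argument.
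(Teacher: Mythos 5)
The paper states this lemma with no proof at all (it is presented as a direct consequence of Definition \ref{conditions} and the maximality condition in Theorem \ref{Coppens}), so there is nothing to compare against line by line; judged on its own, your core argument is correct. The reformulation $\sum_{i=1}^{d-1}(d-i)a_i=\sum_{j=1}^{d-1}(a_1+\cdots+a_j)$, the exchange step (which also gives uniqueness of the maximizer), and the even distribution of the excess $(d-1)+s=(d-2)+(s+1)$ over the $d-2$ free entries are all sound and produce exactly the three stated tuples. One thing you must make explicit: your acceptability check silently reads condition (3) as the consecutive-gap condition $a_{i+1}-a_i\le a_1$ (the standard Ohbuchi/Patel constraint on scrollar invariants). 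As literally printed in the paper, condition (3) says $a_{i+1}-a_1\le a_1$, i.e.\ $a_{i+1}\le 2a_1$, and under that reading the case (3) tuple fails acceptability for \emph{every} $k=2$, not just $(d,g)=(3,5)$: e.g.\ $(2,4,5)$ for $(d,g)=(4,8)$ has $a_3=5>2a_1=4$, so the lemma would need more exclusions than it states. The gap reading is the one under which the lemma as stated is true, and it is the one you in fact use; this should be said, not assumed.

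The genuine error is your closing ``Note''. It is false that in each excluded case no acceptable tuple with $a_1=k$ survives. Take $(d,g)=(5,6)$, an excluded instance of case (2) with $k=1$, $s=2$, $b=20$: the balanced tuple $(1,3,3,3)$ fails only the gap condition, yet $(1,2,3,4)$ \emph{is} acceptable, since its gaps are all $1=a_1$, $a_{d-1}=4\le b/d=4$, and $a_1=1\ge b/(d(d-1))=1$. Your inference ``balanced minimizes $a_{d-1}$, hence if it violates acceptability nothing survives'' is valid only when the violation is of condition (2), the upper bound on $a_{d-1}$; in the excluded cases with $k=1$ the violation is of the gap condition (3), which can be repaired by making the tuple \emph{less} balanced, exactly as in this example (the sporadic case $(3,5)$ is the one excluded case where your claim happens to hold, because there conditions (1) and (2) fail too). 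Fortunately the Note is superfluous for the lemma as stated: the lemma is conditional on the exclusions, and their necessity is already established by the fact that the balanced tuple itself is not acceptable there, so it cannot be ``the maximizing acceptable tuple''; identifying what the true maximizing acceptable tuple is in those cases is a separate question your proof need not, and does not, answer.
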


We consider the three cases of Lemma \ref{lemma_3_cases}.
\begin{lemma}
 \label{lemma_3_cases_2}
Let $\gamma : C \to \PP^1$ be a general cover of type (1), (2) or (3) of 
Lemma \ref{lemma_3_cases}. Then there exists a divisor of degree $s$ 
with support in the ramification locus of $\gamma$ 
such that $h^0(k\, g^1_d) = h^0(k \, g^1_d + D)$ and $h^0(D) = 1$.
\end{lemma}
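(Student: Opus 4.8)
The plan is to run the same base-point argument as in Proposition~\ref{propositionkg1d+D}, adjusted for the fact that the three covers of Lemma~\ref{lemma_3_cases} each carry one extra section of $k\,g^1_d$; accordingly the subtraction of ramification points is stopped one step earlier, and all three cases are handled uniformly. First I would record the relevant cohomology. In each of the cases (1)--(3) the unique index with $a_i\leq k$ is $a_1=k$, all remaining $a_i\geq k+1$; hence from
$$
\gamma_*{\mathcal O}_C\otimes {\mathcal O}(k)\cong {\mathcal O}(k)\oplus\bigoplus_{i=1}^{d-1}{\mathcal O}(k-a_i)
$$
one reads off $h^0(k\,g^1_d)=k+2$, and then $h^1(k\,g^1_d)=s+1$ by Riemann--Roch. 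This uniform value is the only input from Lemma~\ref{lemma_3_cases} that the argument uses.

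Next I would reformulate the target. For an effective $D$ of degree $s$, Riemann--Roch gives $h^0(k\,g^1_d+D)=(k+1)+h^1(k\,g^1_d+D)$, so the desired equality $h^0(k\,g^1_d+D)=h^0(k\,g^1_d)=k+2$ is equivalent to $h^1(k\,g^1_d+D)=1$, i.e.\ by Serre duality to $h^0(K_C-k\,g^1_d-D)=1$. Thus, starting from the system $|K_C-k\,g^1_d|$, which has $h^0=s+1$, I need to subtract $s$ ramification points so as to lower $h^0$ from $s+1$ down to $1$ (one unit short of the value $0$ reached in Proposition~\ref{propositionkg1d+D}).

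The degree argument then proceeds exactly as before. The ramification divisor $R$ is reduced of degree $b=2g+2d-2$, whereas $\deg(K_C-k\,g^1_d)=2g-2-kd$, and the relation $b-(2g-2)=2d$ gives $b>\deg(K_C-k\,g^1_d)$. At step $j$ (for $j=1,\dots,s$) the current system $|K_C-k\,g^1_d-p_1-\cdots-p_{j-1}|$ is nonempty with $h^0=s+2-j\geq 2$, so its base locus has degree at most $\deg(K_C-k\,g^1_d)-(j-1)$; together with the $j-1$ points already chosen, the set of forbidden points has size at most $2g-2-kd<b$. Hence a ramification point $p_j$ that is not a base point exists, and subtracting it lowers $h^0$ by exactly $1$. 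After $s$ steps one obtains $D=p_1+\cdots+p_s$, supported on $R$, with $h^0(K_C-k\,g^1_d-D)=1$, that is $h^0(k\,g^1_d+D)=h^0(k\,g^1_d)$.

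Finally $h^0(D)=1$ is automatic: applying the inequality $h^0(D_1+D_2)\geq h^0(D_1)+h^0(D_2)-1$ with $D_1=k\,g^1_d$ and $D_2=D$, and using $h^0(k\,g^1_d+D)=h^0(k\,g^1_d)$, forces $h^0(D)\leq 1$, while $D$ effective gives $h^0(D)\geq 1$. I expect the only genuinely delicate point to be the bookkeeping of cohomology that identifies the stopping value as $h^0(K_C-k\,g^1_d-D)=1$ rather than $0$; once the uniform computation $h^0(k\,g^1_d)=k+2$ is in place, the degree inequality $b>\deg(K_C-k\,g^1_d)$ guaranteeing a free ramification point at every step is precisely the one already exploited in Proposition~\ref{propositionkg1d+D}, so no new obstruction arises.
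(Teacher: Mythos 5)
Your proof is correct and follows essentially the same route as the paper: compute $h^0(k\,g^1_d)=k+2$, $h^1(k\,g^1_d)=s+1$ from the splitting type, then rerun the base-point subtraction argument of Proposition~\ref{propositionkg1d+D}, stopping at $h^0(K_C-k\,g^1_d-D)=1$, with $h^0(D)=1$ forced by the inequality $h^0(D_1+D_2)\geq h^0(D_1)+h^0(D_2)-1$. The only difference is presentational: the paper treats case (1) and declares (2), (3) ``similar,'' while you observe that all three cases share the single relevant feature $a_1=k$, $a_i\geq k+1$ for $i\geq 2$, and so handles them uniformly.
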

\begin{proof}
In the first case we consider a general cover 
$\gamma : C \to \PP^1$ of this type. Then
we have by \cite{Ballico}
\[
\gamma_*{\mathcal O}_C = {\mathcal O} \oplus {\mathcal O}(-k) \oplus {\mathcal O}(-(k + 1))^{\oplus d-s-3}
\oplus  {\mathcal O}(-(k + 2))^{\oplus s+1}
\]
so that $h^0(k\, g^1_d) = k + 2$ and accordingly $h^1(k \, g^1_d) = s + 1$. 
We  follow the argument of Proposition \ref{propositionkg1d+D}. 
The argument for the cases (2) and (3) is similar.
\end{proof}
For this pair $(\gamma,D)$ the conditions $h^0(D)=1$, $h^0(k g^1_d + D) = k + 2$ imply that for
\[
 \gamma_*{\mathcal O}(D) = \oplus_{i=0}^{d-1} {\mathcal O}(-a_i)
\]
we must have $a_0=0$, $a_1 = k$ and $a_i \geq k + 1$ for $i = 2, \cdots , d -1$. Then the twisted (with
${\mathcal O}(-k - 1)$) dual of the cokernel of ${\mathcal O}_{\PP ^1} \to  \gamma_*{\mathcal O}_C(D)$
 has the form
\[ \oplus_{i=1}^{d-1}{\mathcal O}(b_i)\;\; \mbox{with}\;\, b_1 = -1 \;\; \mbox{and} \;\, 
b_i \geq 0\;\; \mbox{for}\;\, i = 2, \cdots, d- 1
\]
and thus is not balanced with a space of sections of minimum dimension
(see Lemma \ref{lemma_minimal}). 
The other two cases are dealt with similarly.

\begin{conclusion}\label{conclusion_2}
Let $\gamma : C \to \PP^1$ be a general cover such that the dual 
of the cokernel of
${\mathcal O}_{\PP^1} \to  \gamma_*{\mathcal O}_C$ has type as in 
Lemma \ref{lemma_3_cases}. Then there exists
a divisor $D$ of degree $s$ with support in the 
ramification divisor of $\gamma $ such that the dual of the cokernel of
${\mathcal O}_{\PP^1} \to  \gamma_*{\mathcal O}_C(D)$ has unbalanced type  as above.
\end{conclusion}

We define now our divisors that are analogous to the Maroni divisors. 
We consider the pullback of the diagram \ref{basicdiagram} to the open base
$B = {\mathcal H}_{d,g}$ and choose on $Y_{B}$ a reduced divisor ${\mathcal D}$ consisting of any $s$
sections of $\tilde{\pi}|_{B}$ and we let $V_{\D}$ 
be the dual of the cokernel of the natural map
$\iota_{\D}|_{B}: {\mathcal O}_{{\tilde Y}_{B}}\to 
\tilde{\pi}_*{\mathcal O}({\D}) $ and tensor
$V_{\mathcal D}$ with a line bundle $M$ on $\tilde{\PP}_{B}$ such that 
$V^{\prime}_{\D} = V_{\D} \otimes M$ 
has zero degree on the generic fibre of $\pi$.
The sheaf $p_*V^{\prime}_{\mathcal D}$ is reflexive 
(see \cite{Hartshorne}) and therefore it is a vector bundle on an open $U$
of ${\mathcal H}_{d,g}$  with complement of  codimension $\geq 3$.  

Consider an open subset $U'$ of $U$ such that $p_*V_{\D}^{\prime}$
is a trivial of rank $r$, that is,  isomorphic to ${\mathcal O}_{U'}^r$. 
Choose $r$ generating sections $s_1,\ldots,s_r$ of $p_*V_{\D}^{\prime}$ 
on $U'$. 
If we consider their pull backs under $p$ and 
restrict these for $x \in U'$ to $H^0(p^{-1}(x),V_{\D}|p^{-1}(x))$
then they generate the stalk of $V_{\D}^{\prime}$ at a point of 
$p^{-1}(x)$ if and only if $V_{\D}^{\prime}|p^{-1}(x)$ 
is the trivial bundle of rank $r$
on $p^{-1}(x)={\PP}^1$;  indeed, 
 $V_{\D}^{\prime}|p^{-1}(x))=\oplus_i {\mathcal O}(a_i)$ with $\sum_i a_i=0$
and if some $a_i$ are negative, then these sections cannot generate it since
these do not see the ${\mathcal O}(a_i)$ with $a_i$ negative; if 
$V_{\D}^{\prime}|p^{-1}(x)={\mathcal O}_{\PP^1}^r$ by Grauert's theorem they
will generate it (see e.g. \cite[III,Cor.\ 12.9]{HartshorneAG}).
Thus we arrive at the following result.
\begin{theorem}
\label{theorem_generalized_divisor}
Suppose that $g = k(d-1) + s$ with $0\leq s \leq d - 2$ and 
$3\leq d \leq g$, satisfying the conditions that
$g\neq 2d-3$, $g\neq  2d-4$ and $(d,g)\neq (3,5)$. 
Then the vanishing locus of the determinant
of the evaluation map 
${\rm ev} : p^*p_*V^{\prime}_{\D} \to V^{\prime}_{\D} $ defines a
non-empty divisor $\mathfrak{d}_{\D}$ on ${\mathcal H}_{d,g}$.
\end{theorem}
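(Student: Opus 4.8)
The plan is to realize $\mathfrak{d}_{\D}$ as the zero locus of a section of a line bundle on $\H_{d,g}$ obtained by taking the determinant of $\mathrm{ev}$ and pushing it down along $p$, and then to check that this section is neither identically zero (so that its zero locus has codimension one) nor nowhere vanishing (so that the zero locus is non-empty). First I would note that the two bundles in the evaluation map have the same rank $r=d-1$. Indeed $V'_{\D}$ has rank $d-1$ by construction, and by Conclusion \ref{conclusion_1} together with the choice of the twist $M$ the restriction of $V'_{\D}$ to a general fibre $p^{-1}(x)\cong\PP^1$ is the trivial bundle $\O^{d-1}$, so that $p_*V'_{\D}$ is locally free of rank $d-1$ on $U$. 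Hence $\mathrm{ev}:p^*p_*V'_{\D}\to V'_{\D}$ is a map of vector bundles of equal rank $d-1$ over $p^{-1}(U)$, and its determinant is a global section of the line bundle $N=\det V'_{\D}\otimes p^*(\det p_*V'_{\D})^{\vee}$.

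Next I would push $N$ and the section $\det(\mathrm{ev})$ down to the base. On each fibre $p^{-1}(x)\cong\PP^1$ the bundle $\det V'_{\D}$ has degree $\sum_i a_i=0$ by the choice of $M$, while $p^*(\det p_*V'_{\D})^{\vee}$ is trivial on the fibre; therefore $N|_{p^{-1}(x)}\cong\O_{\PP^1}$ and $h^0(p^{-1}(x),N)=1$ is constant in $x$. By cohomology and base change $\mathcal{L}:=p_*N$ is a line bundle on $U$ and $\det(\mathrm{ev})$ descends to a section $\sigma:=p_*\det(\mathrm{ev})\in H^0(U,\mathcal{L})$ whose value $\sigma(x)$ is the constant value of $\det(\mathrm{ev})$ along $p^{-1}(x)$. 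Thus $\sigma(x)=0$ exactly when $\det(\mathrm{ev})$ vanishes identically on the fibre, i.e.\ when $\mathrm{ev}$ fails to be an isomorphism at the generic point of $p^{-1}(x)$. I define $\mathfrak{d}_{\D}$ to be the closure in $\H_{d,g}$ of the zero scheme of $\sigma$; since $\H_{d,g}\setminus U$ has codimension $\geq 3$ this closure is the divisor sought.

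It then remains to check that $\sigma$ is neither identically zero nor nowhere vanishing. If $V'_{\D}|_{p^{-1}(x)}=\O^{d-1}$, then the $d-1$ global sections generate the fibre at every point by Grauert's theorem (as recalled before the statement), so $\mathrm{ev}$ is an isomorphism along the whole fibre and $\sigma(x)\neq 0$; such $x$ exist by Conclusion \ref{conclusion_1}, whence $\sigma\not\equiv 0$ and its zero scheme is empty or pure of codimension one. On the other hand, by Conclusion \ref{conclusion_2} and Lemma \ref{lemma_3_cases}, under the hypotheses $g\neq 2d-3$, $g\neq 2d-4$ and $(d,g)\neq(3,5)$ — which via Theorem \ref{Coppens} guarantee that the relevant unbalanced type is acceptable and is realized in $\H_{d,g}$ — there is a non-empty locus of covers whose associated $V'_{\D}$ restricts on the fibre to an unbalanced bundle $\oplus_i\O(a_i)$ with $\sum_i a_i=0$ containing a summand $\O(-1)$, for instance $\O(-1)\oplus\O(1)\oplus\O(0)^{d-3}$. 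Since global sections never meet the $\O(-1)$ summand, at every point $y$ of the fibre the image of $\mathrm{ev}_y$ omits that summand, so $\mathrm{rk}\,\mathrm{ev}_y\leq d-2$ and $\det(\mathrm{ev})\equiv 0$ along the fibre; hence $\sigma$ vanishes on this locus, which is therefore non-empty. Combining the two statements shows that $\mathfrak{d}_{\D}$ is a non-empty divisor.

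The step I expect to be the main obstacle is the identification of the vanishing of $\sigma$ with the failure of balancedness — in particular the verification that in the unbalanced case the determinant vanishes along the \emph{entire} fibre rather than only at isolated points, which rests on the fact that the $\O(-1)$ summand is invisible to global sections — together with the appeal to the Coppens existence result to ensure the degeneracy locus is genuinely realized by covers in the open Hurwitz space. The numerical exclusions $g\neq 2d-3$, $g\neq 2d-4$ and $(d,g)\neq(3,5)$ are precisely what make this last point go through.
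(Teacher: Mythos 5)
Your proposal is correct and takes essentially the same route as the paper's proof: generic triviality of $V^{\prime}_{\D}$ on fibres via Conclusion \ref{conclusion_1}, non-emptiness of the degeneracy locus via Conclusion \ref{conclusion_2} (the $\O(-1)$ summand being invisible to global sections, so $\det(\mathrm{ev})$ vanishes along the whole fibre), Grauert/base change to identify fibres of $p_*V^{\prime}_{\D}$ with sections on the fibres of $p$, and extension of the resulting divisor across the codimension $\geq 3$ complement of $U$. The only difference is one of packaging: you descend $\det(\mathrm{ev})$ to a section $\sigma$ of the line bundle $p_*N$ on the base (using that $N$ is fibrewise trivial), whereas the paper works with the degeneracy locus upstairs on $p^{-1}(U)$ and notes it is a union of fibres.
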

\begin{proof} 
We take a cover as in Conclusion \ref{conclusion_1}. 
Then there is a point in ${\H}_{d,g}$
representing this cover and such that the divisor $D$ of 
Conclusion \ref{conclusion_1} is given by the restriction
of the above ${\D}$ to the corresponding fiber. 
This is because the points of ${\mathcal H}_{d,g}$ parametrize
simply branched coverings with ordered branch points, and hence the 
ramification points are ordered too.  
We  conclude  that $V^{\prime}_{\mathcal D}$ is trivial 
on the generic fibre. 
But by Conclusion \ref{conclusion_2} above 
we see,
for similar reasons,  that the locus in ${\H}_{d,g}$,  
where $V^{\prime}_{\mathcal D}$ restricted to a
fibre is non-trivial is not empty. 
By Lemma \ref{lemma_minimal} and Grauert's theorem
both the above loci belong to $U$.  
Thus the evaluation map on $p^{-1}U$ is a map of vector bundles 
of the same rank with degeneracy locus which is not the whole space
nor empty. 
Therefore it defines a divisor on $p^{-1}U$ which extends uniquely 
to a divisor on ${\H}_{d,g}$.
\end{proof}

%%%%%%%%%%%%%%%%%%%%%%%%%%%%%%%%%%%%%%%%%%%%%%%%%%%%
\end{section}
%%%%%%%%%%%%%%%%%%%%%%%%%%%%%%%%%%%%%%%%%%%%%%%%%%%%
\begin{section}{Cycle Classes}
In this section we indicate how to calculate the classes of 
closed divisors in $\overline{\H}_{d,g}$ that extend the above
defined  divisors $\mathfrak{d}_{\D}$. 
We proceed as in \cite{vdG-K} for the Maroni case.

We take the standard line bundle $M$ constructed in 
\cite[Definition 6.2]{vdG-K} 
and we let $V^{\prime}_{\D} =V_{\D} \otimes M$.
We then want to find a divisor $A_{\mathcal D}$ 
supported on the singular fibres of $p: \tilde{Y} \to B$ such that
$c_1(V^{\prime}_{\mathcal D}) - A_{\D}$ is a pullback under 
$p$ from the base~$B$. In \cite[Section 6]{vdG-K} we constructed
such a divisor $A_{\mathcal D}$ in the case where the divisor 
${\D}$ is zero. This is done locally on the base $B$. We
restrict to $1$-dimensional bases $B$ and consider the fibre of
$p$ over a points $s$ where $B$ intersects the boundary.
The fibre there consists of a chain of rational curves.
In the present case the
degrees of $V^{\prime}$, the bundle used in \cite{vdG-K} 
and corresponding to $\D=0$,
 and $V^{\prime}_{\D}$  
differ at a chain $R_0=P_1,R_1, \ldots, R_m=P_2$ only
at $R_0$ and $R_m$.  Adapting the result gives the following analogue
of \cite[Conclusion 6.5]{vdG-K}.
\begin{proposition}
For each irreducible component $\Sigma$ of 
$S_{j,\mu}$ we define
\[ c_{\Sigma,{\mathcal D}}= d-n-2(r-{\mathcal D} \cdot P_2).
\]
If we let
\[
A^{\Sigma}_{\mathcal D}=-{1\over 2}
\sum_{i=0}^{m-1}((m-i)\, c_{\Sigma,{\mathcal D}}-\delta_i)R_i^{\Sigma}
\]
then the degree of $A^{\Sigma}_{\mathcal D}$ on 
$R_i^{\Sigma}$ equals the degree of $V^{\prime}_{\mathcal D}$
on $R_i^{\Sigma}$.
\end{proposition}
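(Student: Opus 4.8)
The plan is to recast the statement as an intersection-number identity on the fibre surface and then to bootstrap from the already-settled case ${\mathcal D}=0$. Over a one-dimensional base $B$ the total space $\tilde Y$ is a surface and $p:\tilde Y\to B$ has the chain $R_0=P_1,R_1,\dots,R_m=P_2$ as its fibre over $s$. A line bundle on $\tilde Y$ is a pullback from $B$ exactly when it meets every component of every fibre in degree zero, so $c_1(V'_{\mathcal D})-A_{\mathcal D}$ is such a pullback precisely when
\[
A^{\Sigma}_{\mathcal D}\cdot R_i^{\Sigma}=\deg\bigl(V'_{\mathcal D}|_{R_i^{\Sigma}}\bigr)\qquad(i=0,\dots,m).
\]
This is exactly the assertion of the Proposition, so I would prove it by evaluating both sides on each component of the chain. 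The case ${\mathcal D}=0$ is \cite[Conclusion 6.5]{vdG-K}: it produces $A^{\Sigma}$ with constant $c_\Sigma=d-n-2r$ and the same correction terms $\delta_i$, satisfying $A^{\Sigma}\cdot R_i^{\Sigma}=\deg(V'|_{R_i^{\Sigma}})$. Since $c_{\Sigma,{\mathcal D}}=c_\Sigma+2({\mathcal D}\cdot P_2)$ and the $\delta_i$ are unchanged, it suffices to match the two \emph{differences} induced by ${\mathcal D}$.

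First I would record the difference of the right-hand sides. Because ${\mathcal D}$ is effective and supported on the sections of $\tilde\pi$, and these sections avoid the singular locus and meet the special fibre only in its two end components, the twist taking $V'$ to $V'_{\mathcal D}$ alters the restricted degree only on $R_0=P_1$ and $R_m=P_2$; on the interior components one has $\deg(V'_{\mathcal D}|_{R_i^{\Sigma}})=\deg(V'|_{R_i^{\Sigma}})$ for $1\le i\le m-1$. The two end shifts are then read off from the local structure of $\tilde\pi_*{\mathcal O}(\tilde{\mathcal D})=\pi_*{\mathcal O}({\mathcal D})$ near $P_1$ and $P_2$, and are governed by ${\mathcal D}\cdot P_2$ (the number of sections of ${\mathcal D}$ specializing onto the component $P_2$), the complementary contribution being fixed by the fact that $V'_{\mathcal D}$ still has total degree zero on the fibre.

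Next I would compute the difference of the left-hand sides. Only the constant changes, so
\[
A^{\Sigma}_{\mathcal D}-A^{\Sigma}=-({\mathcal D}\cdot P_2)\sum_{i=0}^{m-1}(m-i)\,R_i^{\Sigma},
\]
and I would intersect this with each $R_j^{\Sigma}$ using the tridiagonal intersection form of the chain (adjacent components meeting transversally, interior self-intersections $-2$ coming from the $A_k$-resolution). The arithmetic coefficients $(m-i)$ are arranged so that they telescope against the $-2$'s and the intersection vanishes on every interior component, while only the two end components receive a nonzero contribution, equal to $\pm({\mathcal D}\cdot P_2)$. Matching these end values against the degree shifts of the previous step finishes the verification, the interior identities being inherited verbatim from the ${\mathcal D}=0$ case.

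The main obstacle will be the \emph{local analysis at the ends}: pinning down exactly how much the restricted degree of $V'_{\mathcal D}$ on $P_1$ and $P_2$ moves when ${\mathcal D}$ is switched on, and checking that this equals the $\pm({\mathcal D}\cdot P_2)$ coming from the intersection calculation. This requires carefully relating the twist by ${\mathcal O}(\tilde{\mathcal D})$ to the intersection number ${\mathcal D}\cdot P_2$ and to the combinatorial constant $d-n$ attached to the partition $\mu=(m_1,\dots,m_n)$ of $\gamma^{-1}(Q)$, using that ${\mathcal O}({\mathcal D})$ is a line bundle with $\tilde\pi_*{\mathcal O}(\tilde{\mathcal D})=\pi_*{\mathcal O}({\mathcal D})$ (the discussion after Diagram \ref{basicdiagram}) and keeping the sign conventions consistent with \cite{vdG-K}. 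Once this end-shift is established, the remainder is the same linear bookkeeping that proves \cite[Conclusion 6.5]{vdG-K}.
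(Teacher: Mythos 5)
Your left-hand side bookkeeping is correct: since $c_{\Sigma,\mathcal{D}}=c_\Sigma+2(\mathcal{D}\cdot P_2)$, one gets $A^{\Sigma}_{\mathcal{D}}-A^{\Sigma}=-(\mathcal{D}\cdot P_2)\sum_{i=0}^{m-1}(m-i)R_i^{\Sigma}$, and intersecting with the reduced chain (end components of self-intersection $-1$, interior ones $-2$) gives $+\mathcal{D}\cdot P_2$ on $R_0$, $0$ on interior components, and $-\mathcal{D}\cdot P_2$ on $R_m$. The gap is in the matching with the right-hand side. Both bundles are twisted by the \emph{same} standard line bundle $M$, so by Lemma \ref{lemma_chern_classes} with $\ell=\mathcal{D}$ one has $c_1(V'_{\mathcal{D}})-c_1(V')=c_1(V_{\mathcal{D}})-c_1(V)=-\tilde{\pi}_*\mathcal{D}=-\mathcal{D}_1$, where $\mathcal{D}_1$ is an \emph{effective} sum of $s$ sections meeting the special fibre only in $P_1$ and $P_2$. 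Hence the two end shifts are $-\mathcal{D}_1\cdot P_1$ and $-\mathcal{D}_1\cdot P_2$: both are $\leq 0$ and they sum to $-s$, so they cannot be $+\mathcal{D}\cdot P_2$ and $-\mathcal{D}\cdot P_2$ unless $s=0$. Your appeal to ``$V'_{\mathcal{D}}$ still has total degree zero'' to fix the complementary shift tacitly assumes that $V'$ also has degree zero on the fibre, and it does not: $V$ has relative degree $(k+1)(d-1)+s$, which for $0<s\leq d-2$ is not divisible by the rank $d-1$, so \emph{no} twist of $V$ has relative degree zero, and $V'=V\otimes M$ has fibre degree $s$. For the same reason your base case is vacuous in this setting: no divisor supported on a fibre can satisfy $A^{\Sigma}\cdot R_i^{\Sigma}=\deg(V'|_{R_i^{\Sigma}})$ for every $i$, because the right-hand sides sum to $s\neq 0$ while the left-hand sides always sum to $0$. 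Conclusion 6.5 of \cite{vdG-K} is a statement on the Hurwitz space with $d-1\mid g$; it is not available as a same-space base case here, which is exactly the situation this paper is about.

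What the Proposition actually requires (and what ``adapting'' means in the paper) is to redo the local computation of \cite[Section 6]{vdG-K} for $V'_{\mathcal{D}}$ itself, and it has three ingredients. First, the interior degrees are untouched by $\mathcal{D}$ because $\mathcal{D}_1$ misses the exceptional components -- this part of your argument is right. Second, on $R_m=P_2$ the degree is the $\mathcal{D}=0$ local value shifted by $-\mathcal{D}_1\cdot P_2$, and since $A^{\Sigma}_{\mathcal{D}}\cdot R_m^{\Sigma}=-\tfrac12\,(c_{\Sigma,\mathcal{D}}-\delta_{m-1})$, this shift is precisely absorbed by replacing $r$ with $r-\mathcal{D}\cdot P_2$ in the constant. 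Third, on $R_0=P_1$ no local analysis is needed at all: once the degrees agree on $R_1,\ldots,R_m$, they agree on $R_0$ automatically, because $A^{\Sigma}_{\mathcal{D}}\cdot F=0$ for the fibre $F$ and $\deg(V'_{\mathcal{D}}|_F)=0$ -- here the degree-zero normalization of $V'_{\mathcal{D}}$ (not of $V'$) is legitimately used. The shift ``$+\mathcal{D}\cdot P_2$ on $P_1$'' that your intersection computation demands is real only as a comparison of formulas across two \emph{different} Hurwitz spaces (genus $k(d-1)$ in \cite{vdG-K} versus genus $k(d-1)+s$ here): the discrepancy $s=\mathcal{D}_1\cdot P_1+\mathcal{D}_1\cdot P_2$ is made up by the $2s$ additional branch points landing on $P_1$, not by the twist by $\mathcal{O}(\mathcal{D})$. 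Your proof attributes it to the twist, and that is where it breaks.
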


The formalism described in \cite{vdG-K} shows that if $Q$ denotes
the degeneracy locus of the evaluation map ${\rm ev}: p^*p_*V^{\prime}_{\D}
\to V_{\D}^{\prime}$ we get that 
$$
c_1(Q)+p^*R^1p_*V_{\D}^{\prime} +(-A_{\D})_{\rm sh},
$$
where the index ${\rm sh}$ 
denotes the shift as in \cite[Definition 3.7]{vdG-K},
is an effective class on $\tP$ that is a pull back under $p$, and
pulling it back under a section of $p$ gives us an effective class
$\overline{\mathfrak{d}}_{\D}$
which extends the divisor $\mathfrak{d}_{\D}$.

To calculate the class $\overline{\mathfrak{d}}_{\D}$
of the degeneracy locus we wish to use 
the formula of \cite[Theorem 3.10]{vdG-K}. For this we need
first a the following lemma (see 
\cite[Proposition 10.1]{vdG-K}).

\begin{lemma}
\label{lemma_chern_classes}
Let ${\mathcal L}$ be a line bundle on $\tilde{Y}$ with first Chern class 
$\ell$. If $U$ denotes the ramification divisor of $\tilde{\pi}$ we have
$$
\begin{aligned}
&c_1(\tilde{\pi}_*{\mathcal L})=\tilde{\pi}_*\ell-\frac{1}{2}\tilde{\pi}_*(U), 
\\
&c_2(\tilde{\pi}_*{\mathcal L})-c_2(\pi_* {\mathcal O}_{\tilde{Y}})=
\frac{1}{2} ((\tilde{\pi}_*\ell)^2-\tilde{\pi}_* (\ell^2)) 
-\frac{1}{2}(\tilde{\pi}_*U\cdot \tilde{\pi}_*\ell-\tilde{\pi}_*(U\cdot \ell)). 
\end{aligned}
$$
\end{lemma}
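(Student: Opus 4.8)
The plan is to obtain both identities at once from the Grothendieck--Riemann--Roch theorem applied to the proper, generically finite morphism $\tilde{\pi}\colon\tilde{Y}\to\tilde{\PP}$ between the smooth spaces $\tilde{Y}$ and $\tilde{\PP}$, and then to read off the terms in codimensions one and two. Since source and target are smooth, I would use the relative form
$$
\mathrm{ch}(\tilde{\pi}_{!}\mathcal L)=\tilde{\pi}_*\bigl(\mathrm{ch}(\mathcal L)\cdot\mathrm{Td}(T_{\tilde{\pi}})\bigr),
$$
where $T_{\tilde{\pi}}=T_{\tilde{Y}}-\tilde{\pi}^*T_{\tilde{\PP}}$ is the virtual relative tangent class, of rank $0$. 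First I would record that for the line bundles at issue the higher direct images vanish, so that $\tilde{\pi}_{!}\mathcal L=\tilde{\pi}_*\mathcal L$: this is exactly the mechanism used for $\mathcal O(\tilde{\D})$ above, combining $R^j\nu_*\mathcal O_{\tilde{Y}}=0$ for $j\ge1$ with the finiteness of $\pi\colon Y\to\tilde{\PP}$. In particular $\tilde{\pi}_*\mathcal O_{\tilde{Y}}=\pi_*\mathcal O_{\tilde{Y}}$, which identifies the term $c_2(\pi_*\mathcal O_{\tilde{Y}})$ in the statement with the $c_2$ of the GRR computation in the case $\mathcal L=\mathcal O_{\tilde{Y}}$.

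The only geometric input needed is the degree-one part of $\mathrm{Td}(T_{\tilde{\pi}})$. By definition the ramification divisor of a generically finite map of smooth varieties satisfies $[U]=K_{\tilde{Y}}-\tilde{\pi}^*K_{\tilde{\PP}}=-c_1(T_{\tilde{\pi}})$, so the degree-one Todd term is $\tfrac12c_1(T_{\tilde{\pi}})=-\tfrac12U$, irrespective of the higher (rank-zero) Todd contributions. Writing $\mathrm{Td}(T_{\tilde{\pi}})=1-\tfrac12U+t_2+\cdots$ and $\mathrm{ch}(\mathcal L)=1+\ell+\tfrac12\ell^2+\cdots$, I would expand the product through codimension two and push forward. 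The codimension-zero term gives that $\tilde{\pi}_*\mathcal L$ has rank $d$, and the codimension-one term gives at once $c_1(\tilde{\pi}_*\mathcal L)=\tilde{\pi}_*\ell-\tfrac12\tilde{\pi}_*U$, which is the first displayed formula.

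For the second formula the key point is that the uncomputed term $t_2$ enters $\mathrm{ch}_2(\tilde{\pi}_*\mathcal L)=\tfrac12\tilde{\pi}_*(\ell^2)-\tfrac12\tilde{\pi}_*(U\cdot\ell)+\tilde{\pi}_*t_2$ only through $\tilde{\pi}_*t_2$, which is independent of $\mathcal L$; hence it cancels in the difference $\mathrm{ch}_2(\tilde{\pi}_*\mathcal L)-\mathrm{ch}_2(\tilde{\pi}_*\mathcal O_{\tilde{Y}})$, leaving $\tfrac12\tilde{\pi}_*(\ell^2)-\tfrac12\tilde{\pi}_*(U\cdot\ell)$. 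It then remains to convert Chern characters into Chern classes via $c_2=\tfrac12c_1^2-\mathrm{ch}_2$ and to insert the value of $c_1$ just found. Using $c_1(\tilde{\pi}_*\mathcal L)=\tilde{\pi}_*\ell-\tfrac12\tilde{\pi}_*U$ and $c_1(\tilde{\pi}_*\mathcal O_{\tilde{Y}})=-\tfrac12\tilde{\pi}_*U$, the square terms combine as $\tfrac12\bigl(c_1(\tilde{\pi}_*\mathcal L)^2-c_1(\tilde{\pi}_*\mathcal O_{\tilde{Y}})^2\bigr)=\tfrac12(\tilde{\pi}_*\ell)^2-\tfrac12\,\tilde{\pi}_*\ell\cdot\tilde{\pi}_*U$, and subtracting the Chern-character difference yields precisely the asserted expression.

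I expect the only genuine obstacle to be the bookkeeping of higher direct images and the passage between $\tilde{Y}$ and $Y$: one must check that for the relevant $\mathcal L$ one has $R^i\tilde{\pi}_*\mathcal L=0$ for $i>0$, so that $\tilde{\pi}_{!}\mathcal L=\tilde{\pi}_*\mathcal L$ is an honest locally free sheaf, and that the identification $\tilde{\pi}_*\mathcal O_{\tilde{Y}}=\pi_*\mathcal O_{\tilde{Y}}$ via $\nu_*\mathcal O_{\tilde{Y}}=\mathcal O_Y$ is legitimate; both rest on the rational-singularities statements recalled in Section~\ref{setting}. Everything else is the formal expansion of GRR through codimension two, where the convenient feature is that the degree-two Todd contribution never has to be evaluated, since it is killed by taking the difference with the case $\mathcal L=\mathcal O_{\tilde{Y}}$.
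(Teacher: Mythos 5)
Your proof is correct, and it is essentially the argument behind this statement: the paper gives no proof of its own here but simply cites \cite[Proposition 10.1]{vdG-K}, where the formulas are obtained by the same Grothendieck--Riemann--Roch expansion through codimension two, with the difference against $\mathcal{O}_{\tilde{Y}}$ cancelling the unknown degree-two Todd term and the vanishing of higher direct images supplied by the rational ($A_k$) singularities exactly as you indicate.
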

We denote the sections of $p$ by $\sigma_i$ ($i=1,\ldots,b$)
and their images by $\Xi_i$ ($i = 1, \ldots, b$). These
are divisors on $\tilde{\PP}$. 
Over the section $\Xi_i$ we have a ramification divisor $U_i$. 
If ${\D}_1$ is a reduced divisor with support on the sections 
$\cup_i\Xi_i$ we let ${\D}$ be the corresponding divisor
with support on the ramification divisor of $\tilde{\pi}$. 
Then we have
\[
\tilde{\pi}_*{\D} = {\D}_1, \;\;\;\tilde{\pi}^*{\D}_1 = 
2\, {\D} + \Gamma_{\D} ,
\]
with $\Gamma_{\D}$ disjoint from ${\D}$. 
Another relation that we will use with $W = \tilde{\pi}_*(U)$ is
$$
U \cdot {\D} = \frac{1}{2} W \cdot {\D}_1=\frac{1}{2}{\D}_1^2 .
$$

Using Lemma \ref{lemma_chern_classes} we thus find
$$
c_2(\tilde{\pi}_*{\mathcal O}_{\tilde{Y}}({\D})) = 
c_2(\tilde{\pi}_*{\mathcal O}_{\tilde{Y}}),\quad
c^2_1(\tilde{\pi}_*{\mathcal O}_{\tilde{Y}}({\D}))= 
c^2_1(\tilde{\pi}_*{\mathcal O}_{\tilde{Y}}) .
$$
Therefore, when we apply \cite[Theorem 3.10]{vdG-K}  
in order to calculate the class $\overline{\mathfrak{d}}_{\D}$ 
the only thing that differs from the case treated there
is the choice of the  line bundle $A_{\D}$. This affects only the 
definition of $c_{\Sigma,{\D}}$
and therefore the class  is calculated by the same formula given in 
\cite[Theorem 8.3]{vdG-K} 
with $c_{j,\mu}$ determined by $c_{\Sigma,{\D}}$. 

We can twist the bundle $V^{\prime}_{\D}$ by a line bundle $N$
corresponding to a divisor on $\tP$ with support on the boundary of $\tP$.
This results in a divisor class $\overline{\mathfrak{d}}_{\D,N}$ 
on $\overline{\H}_{d,g}$
extending the divisor $\mathfrak{d}_{\D}$ on $\H_{d,g}$.
We thus arrive as in \cite[Section 9]{vdG-K}
at the following theorem.

\begin{theorem}\label{correction1}
Suppose that $3 \leq d \leq g$ and 
$g\neq 2d-3$, $g\neq  2d-4$ and ($d,g)\neq (3, 5)$. 
Let $\Sigma$ be an irreducible component of the boundary $S_{j,\mu}$ of
$\overline{\mathcal{H}}_{d,g}$. Then
the coefficient $\sigma_{j,\mu}$ of $\Sigma$
of the locus $\cap_N \overline{\mathfrak{d}}_{\D,N}$ is  equal to
$$
\begin{aligned}
 m(\mu)\left(\frac{1}{12}(d-\sum_{\nu=1}^{n(\mu)} \frac{1}{m_{\nu}})+
\frac{j(b-j)(d-2)}{8(b-1)(d-1)}\right)
-\frac{1}{8(d-1)}\sum_{i=1}^{m(\mu)} (\delta_{i-1}-\delta_i)^2 &\\
-\frac{d-1}{2} \left( \frac{m(\mu)}{4} - \sum_{i=1}^{m(\mu)} (e_{i-1}-e_i)^2 
\right) \,& . \\
\end{aligned}
$$
\end{theorem}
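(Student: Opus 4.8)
The plan is to transport the computation of \cite[Section 9]{vdG-K} for the Maroni divisors to the present setting, changing only the boundary correction term. The backbone is the formula of \cite[Theorem 3.10]{vdG-K}, which expresses the class of the degeneracy locus of an evaluation map in terms of $c_1^2$ and $c_2$ of the relevant pushforward bundle together with the correction divisor $A_{\D}$ supported on the singular fibres. I would organize the argument in three steps: (i) show that the Chern-class contributions are literally those of the Maroni case $\D=0$; (ii) locate the single place where $\D$ enters, namely $A_{\D}$ through its constant $c_{\Sigma,\D}$; and (iii) perform the boundary bookkeeping along the chain of rational curves and then intersect over the twists $N$.

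For step (i), I would apply Lemma \ref{lemma_chern_classes} with $\mathcal{L}=\O_{\tY}(\D)$ and substitute the three relations recorded above, namely $\tilde{\pi}_*\D=\D_1$, $\tilde{\pi}^*\D_1=2\,\D+\Gamma_{\D}$ with $\Gamma_{\D}$ disjoint from $\D$, and $U\cdot\D=\frac{1}{2}W\cdot\D_1=\frac{1}{2}\D_1^2$. These force $c_2(\tilde{\pi}_*\O_{\tY}(\D))=c_2(\tilde{\pi}_*\O_{\tY})$ and $c_1^2(\tilde{\pi}_*\O_{\tY}(\D))=c_1^2(\tilde{\pi}_*\O_{\tY})$, exactly as asserted before the theorem. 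Hence the ``interior'' part of the class is identical to \cite[Theorem 8.3]{vdG-K}, producing verbatim the first summand $m(\mu)(\frac{1}{12}(d-\sum_{\nu}\frac{1}{m_{\nu}})+\frac{j(b-j)(d-2)}{8(b-1)(d-1)})$ and the second summand $-\frac{1}{8(d-1)}\sum_{i}(\delta_{i-1}-\delta_i)^2$; none of this part of the calculation needs to be redone.

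For steps (ii) and (iii), the only altered datum is $A_{\D}$, whose degree on the component $R_i^{\Sigma}$ of a chain $R_0=P_1,\ldots,R_m=P_2$ is controlled by $c_{\Sigma,\D}=d-n-2(r-\D\cdot P_2)$. Compared with the Maroni constant $c_{j,\mu}$, to which it reduces when $\D=0$, it differs precisely by the term $2\,\D\cdot P_2$, which records how many of the $s$ sections cutting out $\D$ pass through $P_2$; the cumulative counts of these sections along the chain are what assemble into the quantities $e_i$ of the statement, while the $\delta_i$ keep their Maroni meaning. Substituting $c_{\Sigma,\D}$ for $c_{j,\mu}$ in the expression of \cite[Theorem 8.3]{vdG-K} and running the same telescoping summation over $i=0,\ldots,m-1$ that produced the quadratic boundary terms there yields the third summand $-\frac{d-1}{2}(\frac{m(\mu)}{4}-\sum_{i}(e_{i-1}-e_i)^2)$; finally, intersecting over all admissible boundary twists $N$ removes the residual ambiguity in the extension and isolates the stated coefficient. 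I expect step (iii) to be the main obstacle: one must check that the $\D\cdot P_2$ contribution, after summation along the chain and intersection over $N$, reorganizes cleanly into the square-difference form $\sum_{i}(e_{i-1}-e_i)^2$ and does not generate stray cross terms with the Maroni data---a cancellation underwritten by the vanishing of the $c_1^2$ and $c_2$ differences established in step (i). The excluded cases $g\neq 2d-3$, $g\neq 2d-4$ and $(d,g)\neq(3,5)$ play no role in the class computation itself; they enter only through the existence of $\mathfrak{d}_{\D}$ in Theorem \ref{theorem_generalized_divisor}.
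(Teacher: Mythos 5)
Your proposal follows essentially the same route as the paper: you establish the vanishing of the differences of $c_1^2$ and $c_2$ via Lemma \ref{lemma_chern_classes} and the stated divisor relations, localize the entire effect of $\D$ in the constant $c_{\Sigma,\D}=d-n-2(r-\D\cdot P_2)$ of the correction divisor $A_{\D}$, and then invoke the formula of \cite[Theorem 8.3]{vdG-K} with the modified constant before intersecting over the twists $N$, which is exactly the argument the paper gives in the text preceding the theorem. The one imprecision is your gloss of the $e_i$ as cumulative counts of sections through components of the chain --- the paper instead characterizes them as rational numbers arising from rounding off a rational solution to an integral one, depending on the constants $c_{\Sigma,\D}$ --- but this does not create a gap, since in both readings the $e_i$ are whatever the machinery of \cite{vdG-K} produces once $c_{j,\mu}$ is replaced by $c_{\Sigma,\D}$.
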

For the numbers $m(\mu)$, $\delta_i$ and $e_i$ we refer to \cite{vdG-K}.
Note that the rational numbers $e_i$ that come from rounding off a rational
solution to an integral one 
depend on the constants $c_{j,\mu}$ that occur above.

In a similar way one can obtain analogues of the theorems of Sections 10 and
11 of \cite{vdG-K} by adding to $\D$ an effective divisor $Z$ supported on the
boundary of $\tY$. One can then define an effective class
$\overline{\mathfrak{d}}_{\D,N,Z}$ that extends the class of 
$\mathfrak{d}_{\D}$. One may ask whether
$$
\cap_{N,Z} \overline{\mathfrak{d}}_{\D,N,Z}
$$
is the class of the Zariski closure of $\mathfrak{d}_{\D}$ on $\overline{\H}_{d,g}$.

The classes $\mathfrak{d}_{\D}$ depend on $\D$, but using the monodromy one sees
that their images on the Hurwitz space with unordered branch points do not.
%%%%%%%%%%%%%%%%%%%%%%%%%%%%%%%%%%%%%%%%%%%%%%%%%%%%%
\end{section}

%%%%%%%%%%%%%%%%%%%%%%%%%%%%%%%%%

 \end{document}